\newtheorem{thm}{Theorem}[section]
\newtheorem{prop}[thm]{Proposition}
\def\graph{\mathop{\rm {graph}}\nolimits}
\def\T{{\mathcal T}}
\def\cst{{C${}^*$}}
\begin{document}
\title{$C*$-algebras associated with complex dynamical systems and 
backward orbit structure}
\author{Tsuyoshi Kajiwara}
\address[Tsuyoshi Kajiwara]{Department of Environmental and
Mathematical Sciences,
Okayama University, Tsushima, 700-8530,  Japan}

\author{Yasuo Watatani}
\address[Yasuo Watatani]{Department of Mathematical Sciences,
Kyushu University, Motooka, Fukuoka, 819-0395, Japan}
\maketitle

\begin{abstract}
Let $R$ be a rational function. The iterations $(R^n)_n$ of $R$ gives 
a complex dynamical system on the Riemann sphere. We associate 
a $C^*$-algebra and study a relation between the $C^*$-algebra and 
the original complex dynamical system. 
In this short note, 
we recover the number of $n$-th backward orbits counted without 
multiplicity starting at 
branched points in terms of associated $C^*$-algebras with gauge 
actions. In particular, we can partially 
imagine how a branched point is moved to 
another branched point under the iteration of $R$. 
We use KMS states and a Perron-Frobenius type operator 
on the space of traces to show it.

\medskip\par\noindent
KEYWORDS: complex dynamical system, 
$C^*$-algebra, backward orbit, branched point, \cst -cor\-re\-spond\-ences

\medskip\par\noindent
AMS SUBJECT CLASSIFICATION: 46L08, 46L55

\end{abstract}

\section
{Introduction}

Iteration of a rational function $R$ gives a complex dynamical 
system on the Riemann sphere $\hat{\mathbb C}$ . 
Since there exists a branched point (i.e. critical point),  
$R$ is {\it not} a local homeomorphism any more. 
Hence we are not able to introduce 
an \'{e}tale  groupoid in a usual way 
to associate a groupoid $C^*$-algebra of Renault \cite{Re}. 
For a branched covering $\pi : M \rightarrow M$, 
Deaconu and Muhly \cite{DM} introduced a $C^*$-algebra 
$C^*(M,\pi)$ as the $C^*$-algebra of the \'{e}tale 
groupoid  by substructing the branched points. 
 
In \cite{KW1}, we 
introduced slightly different $C*$-algebras by Cuntz-Pimsner construction 
to include branched points.   
Since the Riemann sphere $\hat{\mathbb C}$ is 
decomposed to the union of 
the Julia set $J_R$ and Fatou set $F_R$, 
we associated three $C^*$-algebras 
${\mathcal O}_R(\hat{\mathbb C})$,  
${\mathcal O}_R(J_R)$ and ${\mathcal O}_R(F_R)$
by considering $R$ as dynamical systems 
on $\hat{\mathbb C}$, $J_R$ and $F_R$ respectively. 
We have studied how properties of $R$ as 
complex dynamical systems are related with the 
structure of the associated 
$C^*$-algebras and their K-groups \cite{KW1}, \cite{IKW}, 
\cite{K} and \cite{W}. One of our aims is to 
analyze the singularity structure 
of the branched points in terms of operator algebras. For example, 
in \cite{IKW},  we showed that the extreme KMS states are 
parameterized by the branched points. 
Recently K. Thomsen introduces and studies 
another convolution $C^*$-algebra of the transformation groupoid 
adding local transfers 
for a rational function in \cite{Th1}, \cite{Th2}. 

In this short note, we study backward orbit structure 
in terms of operator algebras. In particular,  
we recover the number of $n$-th backward orbits counted 
without multiplicity starting at 
branched points in terms of associated $C^*$-algebras with gauge 
actions. If there exists a branched point in the backward orbits, 
then the number decreases  at the branched point, because we do not 
count the multiplicity. 
In particular, we can partially imagine how a branched point is move to 
another branched point under the iteration of $R$. 
We use KMS states and a Perron-Frobenius type operator to show it. 
We should mention that  Kumjian and Renault \cite{KR} study  the  existence and uniqueness of KMS 
states  associated to  general expansive maps which are local homeomorphisms. 

On the other hand, V. Nekrashevych \cite{Ne} 
studies the Cuntz-Pimsner
algebras for self-similar groups like iterated monodromy groups 
of expanding dynamical systems. 
Surprisingly,  he  reconstructed 
the complex dynamical system on the Julia set of 
a hyperbolic rational function from the Cuntz-Pimsner
algebra and the gauge action on it. 
Nekrashevych's  work does not include the case that 
the Julia set contains branched points, 
Thus our results
contains a new fact on the case.

Let $R$ be a rational function of the form 
$R(z) = \frac{P(z)}{Q(z)}$ with relatively prime polynomials 
$P$ and $Q$.   The degree of $R$ is denoted by 
$N = \deg R := \max \{ \deg P, \deg Q \}$. 
We regard a rational function $R$ as a $N$-fold branched 
covering map  $R : \hat{\mathbb C} \rightarrow \hat{\mathbb C}$
on the Riemann sphere $\hat{\mathbb C} = {\mathbb C} 
\cup \{ \infty \}$.  The sequence $(R^n)_n$ of iterations of $R$ 
gives a complex dynamical system on $\hat{\mathbb C}$. 
The Fatou set $F_R$ of $R$ is the maximal open subset of 
$\hat{\mathbb C}$ on which $(R^n)_n$ is equicontinuous (or 
a normal family), and the Julia set $J_R$ of $R$ is the 
complement of the Fatou set in $\hat{\mathbb C}$. 
The Fatou set $F_R$ is a stable part and  the Julia set $J_R$ 
is an unstable part. 

Recall that a {\it branched point} (or {\it critical point}) of $R$  
is a point   
$z_0$  at which $R$ is not locally one to one.  
It is a zero of $R'$ or 
a pole of $R$ of order two or higher.  The image $w_0 =R(z_0)$ 
is called a {\it branch value}  (or {\it critical value}) of $R$.    
Using appropriate local charts, if $R(z) = w_0 + c(z - z_0)^n + 
(\text{higher terms})$ with 
$n \geq 1$ and $c \not= 0$ on some neighborhood of $z_0$, 
then the integer $n = e(z_0) = e_R(z_0)$ is called the 
{\it branch index} of $R$ at $z_0$.  Thus $e(z_0) \geq 2$ if 
$z_0$ is a branched  point, and $e(z_0) = 1$ if $z_0$ is not. 
Therefore 
$R$ is an $e(z_0) :1$ map in a punctured neighborhood of $z_0$.
By the Riemann-Hurwitz formula, there exist $2N - 2$ branched points 
counted with multiplicity, that is, 
$\sum _{z \in \hat{\mathbb C}} (e(z) -1) = 2 \deg R -2$ .
Furthermore for each $w \in \hat{\mathbb C}$, we have 
$\sum _{z \in R^{-1}(w)} e(z) = \deg R $. Let $B_R$ be the 
set of branched points of $R$ and $C_R:= R(B_R)$ be the 
set of the critical values of $R$.  Then the restriction 
$R : \hat{\mathbb C} \setminus R^{-1}(C_R) \rightarrow 
\hat{\mathbb C} \setminus C_R$ is a $N :1$ regular covering, 
where $N = \deg R$. This means that any point $y \in 
\hat{\mathbb C} \setminus C_R $ has an open neighborhood $V$ 
such that $R^{-1}(V)$ has $N$ connected components $U_1, \dots, 
U_N$ and the restriction $R|_{U_k} : U_k \rightarrow V$ is a homeomorphism 
for $k = 1, \dots , N$.  Thus $R$ has $N$ analytic local cross 
sections $S_k = (R|_{U_k})^{-1}$. But if $y$ is in $C_R$, then 
there exist no such open neighborhood $V$. 
This fact causes many difficulties 
to analyze the associated $C^*$-algebra, since we include the 
branched points to construct the $C^*$-correspondence. If 
we will construct the associated groupoid naively, the  \'{e}taleness
( or r-discreteness)  
is not satisfied in general. This is the reason why we associated our  
$C^*$-algebras by Cuntz-Pimsner construction in \cite{KW1}. 
One of our aims is to 
analyze the singularity structure 
of the branched points in terms of operator algebras.  

In our previous paper \cite{IKW}, we study 
KMS-states for the gauge action based on Laca and Neshveyev \cite{LN}. 
The gauge action has a phase transition at $\beta = \log \deg R$. 
We can recover the degree of $R$, the number of branched points, 
the number of exceptional points and the orbits of exceptional points 
from the structure  of the KMS states. 
But we could not know  anything how branched points are related 
each other under the iteration of $R$. In this note we study 
the orbit structure of branched points under iteration. 
The proof depends on the fact that 
extreme KMS states are parameterized by the branched points as described in 
\cite{IKW}.

This work was supported by JSPS KAKENHI Grant Number 23540242, 19340040 
and 23654053.

\section{Construction of the associated $C^*$-algebras}

Since a rational function $R$ of degree at least two is {\it not} 
a homeomorphism, we can not use crossed product construction. 
We replace crossed pruduct construction  by 
Cuntz-Pimsner construction  to obtain the associated $C^*$-algebra. 

We recall Cuntz-Pimsner algebras \cite{Pi}.  
Let $A$ be a $C^*$-algebra 
and $X$ be a Hilbert right $A$-module.  We denote by $L(X)$ be 
the algebra of the adjointable bounded operators on $X$.  For 
$\xi$, $\eta \in X$, the "rank one" operator $\theta _{\xi,\eta}$
is defined by $\theta _{\xi,\eta}(\zeta) = \xi(\eta|\zeta)$
for $\zeta \in X$. The closure of the linear span of rank one 
operators is denoted by $K(X)$.   

A family $({u_i})_{i \in I}$ in $X$ is called a basis  
\cite{K} and \cite{KPW1}  ,  
(or a normalized tight frame more precisely as in \cite{FL}) of $X$ 
if 
$$
x = \sum _{i \in I} u_i(u_i|x)_A  \text{ for any } x \in X, 
$$ 
where the sum is taken as unconditional norm convergence, 
that is, for a directed set 
$\Lambda := \{F \ | \ F \subset I \text{ is a finite subset } \}$, 
$$
x = \lim_{F \in \Lambda} \sum_{i \in F} u_i(u_i|x)_A  
$$ 
Furthermore  $({u_i})_{i \in I}$ is called a finite basis if 
$(u_i)_{i \in I}$ is  a finite set. 
If a Hilbert $C^*$-module is countably generated, 
then there  exists a countable basis (that is, 
finite or a countably infinite basis ) of $X$ and written  
as $\{u_i\}_{i=1}^{\infty}$, where some $u_i$ may be zero. 
If $A$ has a unit and $X$ has a finite basis, then 
$X$ is algebraically finitely generated 
and projective over $A$ and $K(X) = L(X)$. 

We say that 
$X$ is a Hilbert $C^*$-bimodule (or $C^*$-correspondence) over $A$ 
if $X$ is a Hilbert right  $A$-
module with a homomorphism $\phi : A \rightarrow L(X)$.  
In this note, we assume  that 
$X$ is full and  $\phi$ is injective. 
Let $F(X) = \oplus _{n=0}^{\infty} X^{\otimes n}$
be the full Fock module of $X$ with the convention 
$X^{\otimes 0} = A$. 
 For $x \in X$, the creation operator
$T_x \in L(F(X))$ is defined by 
$$
T_x(a) =  x a  \qquad \text{and } \ 
T_{x}(x_1 \otimes \dots \otimes x_n) = x \otimes 
x_1 \otimes \dots \otimes x_n .
$$
We define $i_{F(X)}: A \rightarrow L(F(X))$ by 
$$
i_{F(X)}(a)(b) = ab \qquad \text{and } \ 
i_{F(X)}(a)(x_1 \otimes \dots \otimes x_n) = (\phi (a)
x_1) \otimes \dots \otimes x_n 
$$
for $a,b \in A$.  The Cuntz-Toeplitz algebra ${\mathcal T}_X$ 
is the $C^*$-subalgebra of $L(F(X))$ generated by $i_{F(X)}(a)$
with $a \in A$ and $T_x$ with $x \in X$.  
Let $j_K : K(X) \rightarrow {\mathcal T}_X$ be the homomorphism 
defined by $j_K(\theta _{x,y}) = T_xT_y^*$. 
We consider the ideal $I_X := \phi ^{-1}(K(X))$ of $A$. 
Let ${\mathcal J}_X$ be the ideal of ${\mathcal T}_X$ generated 
by $\{ i_{F(X)}(a) - (j_K \circ \phi)(a) ; a \in I_X\}$.  Then 
the Cuntz-Pimsner algebra ${\mathcal O}_X$ is the 
the quotient ${\mathcal T}_X/{\mathcal J}_X$ . 
Let $\pi : {\mathcal T}_X \rightarrow {\mathcal O}_X$ be the 
quotient map.  Put $S_x = \pi (T_x)$ and 
$i(a) = \pi (i_{F(X)}(a))$. Let
$i_K : K(X) \rightarrow {\mathcal O}_X$ be the homomorphism 
defined by $i_K(\theta _{x,y}) = S_xS_y^*$. Then 
$\pi((j_K \circ \phi)(a)) = (i_K \circ \phi)(a)$ for $a \in I_X$.   
We note that  the Cuntz-Pimsner algebra ${\mathcal O}_X$ is 
the universal $C^*$-algebra generated by $i(a)$ with $a \in A$ and 
$S_x$ with $x \in X$  satisfying that 
$i(a)S_x = S_{\phi (a)x}$, $S_xi(a) = S_{xa}$, 
$S_x^*S_y = i((x|y)_A)$ for $a \in A$, 
$x, y \in X$ and $i(a) = (i_K \circ \phi)(a)$ for $a \in I_X$.
We usually identify $i(a)$ with $a$ in $A$.  
If $X$ has a countable  basis $\{u_1,u_2, \dots\}$, then the last condition 
should  be replaced by 
$i(a) = \lim_{n \rightarrow \infty} \sum_{k =1}^n i(a)S_{u_k}S_{u_k}^*$ 
under the operator norm convergence for any $a \in I_X$. Since 
$\phi(a) \in K(X)$, we automatically have 
$\phi(a) = \lim_{n \rightarrow \infty} \sum_{k =1}^n
 \phi(a){\theta}_{u_k, u_k}$  
under the operator norm convergence, because  
$(\sum_{k =1}^n \phi(a){\theta}_{u_k, u_k})_n$ is an 
approximately units for $K(X)$.  

There exists an action 
$\gamma : {\mathbb R} \rightarrow Aut \ {\mathcal O}_X$
with $\gamma_t(S_{\xi}) = e^{it}S_{\xi}$, which is called the  
gauge action. Since we assume that $\phi: A \rightarrow L(X)$ is 
isometric, there is an embedding $\phi _n : L(X^{\otimes n})
 \rightarrow L(X^{\otimes n+1})$ with $\phi _n(T) = 
T \otimes id_X$ for $T \in L(X^{\otimes n})$ with the convention 
$\phi _0 = \phi : A \rightarrow L(X)$.  We denote by ${\mathcal F}_X$
the $C^*$-algebra generated by all $K(X^{\otimes n})$, $n \geq 0$ 
in the inductive limit algebra $\varinjlim L(X^{\otimes n})$. 
Let ${\mathcal F}_n$ be the $C^*$-subalgebra of ${\mathcal F}_X$ generated by 
$K(X^{\otimes k})$, $k = 0,1,\dots, n$, with the convention 
${\mathcal F}_0 = A = K(X^{\otimes 0})$.  Then  ${\mathcal F}_X = 
\varinjlim {\mathcal F}_n$. Consult \cite{Pi} and \cite{KPW2} for 
a general Cuntz-Pimsner algebras. .

Let $A= C(\hat{\mathbb C})$ and $X = C(\graph R)$ be the set of continuous 
functions on $\hat{\mathbb C}$ and $\graph R$ respectively, 
where $\graph R = \{(x,y) \in \hat{\mathbb C}^2 ; y = R(x)\} $ 
is the graph of $R$.
Then $X$ is an $A$-$A$ bimodule by 
$$
(a\cdot \xi \cdot b)(x,y) = a(x)\xi(x,y)b(y),\quad a,b \in A,\; 
\xi \in X.$$ 
We define an $A$-valued inner product $(\ |\ )_A$ on $X$ by 
$$
(\xi|\eta)_A(y) = \sum _{x \in R^{-1}(y)} e(x) \overline{\xi(x,y)}\eta(x,y),
\quad \xi,\eta \in X,\; y \in \hat{\mathbb C}.$$  
Thanks to the branch index $e(x)$, the inner product above gives a continuous  
function and $X$ is a full Hilbert bimodule over $A$ without completion. 
The left action of $A$ is unital and faithful. 

Since the Julia set $J_R$ is completely invariant under $R$, i.e., 
$R(J_R) = J_R = R^{-1}(J_R)$, we can consider the restriction 
$R|_{J_R} : J_R \rightarrow J_R$, which will be often denoted by 
the same letter $R$.
 Let $\graph R|_{J_R} = \{(x,y) \in J_R \times J_R \ ; \ y = R(x)\} $
be the graph of the restriction map $R|_{J_R}$ and 
$X(J_R) = C(\graph R|_{J_R})$. 
In the same way as above, $X(J_R)$ is a full Hilbert bimodule over $C(J_R)$. 
Since the Fatou set $F_R$ is also completely invariant, 
$X(F_R):=C_0(\graph R|_{F_R})$ is a full Hilbert bimodule over $C_0(F_R)$.

\medskip

\noindent
{\bf Definition}($C^*$-algebra associated with a complex dynamical system)
 Let $R$ be a rational function with $deg \ R \geq 2$. The $C^*$-algebra
${\mathcal O}_R(\hat{\mathbb C})$ is defined 
as the Cuntz-Pimsner algebra of the Hilbert bimodule 
$X= C(\graph R)$ over 
$A = C(\hat{\mathbb C})$. 
When the Julia set $J_R$ is not empty (for example 
$\deg R \geq 2$), we define 
the $C^*$-algebra ${\mathcal O}_R(J_R)$ 
as the Cuntz-Pimsner algebra of the Hilbert bimodule $
X= C(\graph R|_{J_R})$ over $A = C(J_R)$. 
When the Fatou set $F_R$ is not empty, the $C^*$-algebra
${\mathcal O}_R(F_R)$ is defined similarly.

\section
{Perron-Frobenius operator}

We shall introduce a Perron-Frobenius operator associated with a bimodule 
on the space of traces. 
Let $A$ be a unital $C^*$-algebra.  
We denote by $Trace(A)$ the set of bounded tracial functionals  
on $A$, 
$Trace^{+}(A)$ the set of bounded tracial positive functionals  
on $A$ and $Trace^+_1(A)$ the set of tracial states  
on $A$. We assume that $Trace(A)$ is not empty.
Let $X$ be a countably generated
Hilbert $A$-module and $\{u_i\}_{i=1}^{\infty}$ a countable 
basis of $X$.
For a tracial state $\tau$ on $A$, $\sup_n \sum_{i=1}^n
\tau((u_i|u_i)_A) \in [0,\infty]$ does not depend on the choice of basis
$\{u_i\}_{i=1}^{\infty}$ as in \cite{KW2} and \cite{KW3}. 
We put $d_{\tau} = \sup_n \sum_{i=1}^n
\tau((u_i|u_i)_A)$.  
We call that $X$ is of {\it finite degree type} if 
$\sup_{\tau \in Trace^+_1(A)}d_{\tau}< \infty$ 
(\cite {KPW1},  \cite{KW2} and \cite{KW3}). 
For example, 
let $X$ be the Hilbert bimodule  associated with a rational function $R$. 
Then $X_A$ is of finite degree type and 
$\sup_{\tau \in \T^+_1(A)}d_{\tau} = degree \ R$. 

{\bf Definition.}(Perron-Frobenius operator)
Let $A$ be a unital $C^*$-algebra and $X$  a countably generated (right) 
full Hilbert  module over $A$. 
Let $\phi: A \rightarrow L(X)$ be a unital faithful homomorphism so that 
$X$ is a bimodule over $A$. 
Let$\{u_i\}_{i=1}^{\infty}$ be a basis of $X$.
If $X$ is of finite degree type, then there exists a 
bounded linear operator $F_X: Trace(A) \rightarrow Trace(A)$ such 
that for $\tau \in Trace(A)$, 
$$
F_X(\tau)(a) = \sum_{i=1}^{\infty} \tau ((u_i \ |\ \phi(a)u_i)_A).
$$
Then $F_X$ does not depend on the choice of basis. 
We call $F_X$ a Perron-Frobenius operator associated with a bimodule $X$ 
of finite degree type. 
See \cite {KPW1}, \cite{KW2}, \cite{KW3} and \cite{IKW} for example. 

{\bf Example.}
Let $R$ be a rational function with $deg \ R \geq 2$.
Consider the $C^*$-algebra
${\mathcal O}_R(\hat{\mathbb C})$ 
associated with a complex dynamical system $(R^n)_n$ on 
the Riemann sphere. The $C^*$-algebra
${\mathcal O}_R(\hat{\mathbb C})$
is defined as the Cuntz-Pimsner algebra of the Hilbert bimodule 
$X= C(\graph R)$ over $A = C(\hat{\mathbb C})$. Then we shall show that 
the Perron-Frobenius operator $F_X$  associated with a bimodule $X$ 
is described as follows:   
For a finite Borel measure $\mu$ and 
$a \in A = C(\hat{\mathbb C})$, we have that
\[
(F_X(\mu))(a)= \mu (\tilde{a}).
\]
where a Borel function $\tilde{a}$ is defined by 
$\tilde{a}(y) = \sum_{x \in R^{-1}(y)} a(x)$ and we identify the finite 
Borel measure $\mu$ on $\hat{\mathbb C}$ with the  associated finite trace 
on  $C(\hat{\mathbb C})$ by the same symbol $\mu$.  
In particular, we have 
\[
F_X(\delta_y) = \sum_{x \in R^{-1}(y)}\delta_x,
\]
where $\delta_y$ is the Dirac measure on $y$. 
It is crucial that   
the sum on $x \in R^{-1}(y)$  should be  taken without multiplicity 
in these formulae. 

Let $\{u_{i}\}_{i=1}^{\infty}$ be a countable basis of $X$.
For $f \in X = C(graph \ R)$, we have 
$|f(x,y)| \leq \|f\|_2$ 
and the identity $\sum_{i=1}^{\infty}u_i(u_i|f)_A = f$ 
converges in norm $\|\ \|_2$. Hence the left side  
converges also pointwisely.
For each fixed $y \in \hat{\mathbb C}$ and $x \in R^{-1}(y)$,  
we consider the 
value of $\sum_{i=1}^{\infty}u_i(u_i|f)_A = f$  at $(x,y) = (x,R(x))$: 
\begin{align*}
& \lim_{n \to \infty} \left( \sum_{i=1}^{n}u_i(x,y)(u_i|f)_A(y)
\right) \\
= & \lim_{n \to \infty} \left( \sum_{i=1}^{n}u_i(x,y)
 \sum_{z \in R^{-1}(x)} e_R(z) \overline{u_i(z,y)}f(z,y) \right)
=  f(x,y). 
\end{align*}
We take $f \in X$ such that $f(x,y)=1$ and $f(x',y)=0$ for 
$x' \in R^{-1}(x)$ with $x' \not= x$. 
Then we have
\begin{align*}
&  \lim_{n \to \infty} \left( \sum_{i=1}^{n}u_i(x,y)
 \sum_{z \in R^{-1}(y)} e_R(z) \overline{u_i(z,y)}f(z,y) \right) \\
=  & \lim_{n \to \infty}\left( \sum_{i=1}^{n} e_R(x) \overline{u_i(x,y)}
u_i(x,y) \right) 
=  \sum_{i=1}^{\infty}e_R(x)|u_i(x,y)|^2=1.
\end{align*}
For $a \in A$, we have
\begin{align*}
 &  \sum_{i=1}^{\infty}(u_i|\phi(a)u_i)_A(y)
 = \sum_{i=1}^{\infty} \sum_ {x \in R^{-1}(y)} e_R(x) 
\overline{u_i(x,y)}
   a(x)u_i(x,y)  \\
 & =  \sum_{i=1}^{\infty} \sum_ {x \in R^{-1}(y)} e_R(x)
a(x)|u_i(x,y)|^2  \\
& =  \sum_ {x \in R^{-1}(y)}a(x) \left(\sum_{i=1}^{\infty}
    e_R(x) |u_i(x,y)|^2 \right)  \\
& = \sum_ {x \in R^{-1}(y)}a(x) 
 =  \tilde{a}(y).
\end{align*}

Therefore we have 
\[
(F_X(\mu)) (a) = \mu (\sum_{i=1}^{\infty}(u_i|\phi(a)u_i)_A) = \mu(\tilde{a}).
\]

\begin{prop}
Let $A$ be a unital $C^*$-algebra and $X$ a full 
Hilbert bi-module over $A$ with a unital faithful left action. 
 Let ${\mathcal O}_X$ be the 
Cuntz-Pimsner algebra for $X$ with a gauge action $\gamma$. 
Let $B = {\mathcal O}_X^{\gamma}$ be the fixed point algebra 
under $\alpha$. Let 
$$
Y = \{y \in {\mathcal O}_X \ | \ \gamma _z(y) = zy \ 
(z \in {\mathbb T}) \}
$$
be the 1-spectral subspace. Then we have the following: 

\begin{enumerate}
\item $Y$ is a Hilbert bi-module 
over $B$ under a natural action $b\cdot y \cdot c = byc$
with a $B$-valued inner product $(y\ | \ w)_B = y^*w$ 
for $b,c \in B$ and $y,w \in Y$. Moreover 
the linear span of $\{S_x b \ | \ b \in B, x \in X \}$ is dense in $Y$.  
\item If $\{u_i\}_{i=1}^{\infty}$ is a basis of $X$, 
then $\{S_{u_i}\}_{i=1}^{\infty}$ is a basis of $Y$. 
\item Let $F_Y:Trace(B) \rightarrow Trace(B)$ be the Perron-Frobenius 
operator 
associated with $Y$. Then $(F_Y(\tau))(a) = (F_X(\tau|_A))(a)$ 
for any trace $\tau \in Trace(B)$ and its restriction $\tau|_A$ 
to $A$ and $a \in A$. 
\end{enumerate}
\end{prop}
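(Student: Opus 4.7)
My plan is to establish the three parts in order, using in each case only (i) the gauge-invariance definition of $Y$, (ii) the relations $S_x^*S_y = i((x|y)_A)$, $i(a)S_x = S_{\phi(a)x}$, $S_xi(a)=S_{xa}$, and (iii) the Cuntz--Pimsner identity $i(a) = (i_K\circ\phi)(a)$ for $a \in I_X$.

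For (1), I would first check closedness and invariance: if $y,w \in Y$ and $b,c \in B$, then $\gamma_z(byc)=b\gamma_z(y)c = z byc$, so $byc \in Y$, and $\gamma_z(y^*w) = \bar z z\, y^*w = y^*w$, so $y^*w \in B$. The Hilbert $B$-module axioms $(y|wb)_B = (y|w)_B b$, $(y|w)_B^* = (w|y)_B$ and $(y|y)_B \geq 0$ follow from the analogous $C^*$-identities for $y^*w$. For density, I would use the conditional expectation $E(T) = \int_{\mathbb T} \bar z\,\gamma_z(T)\,dz$ onto $Y$: since polynomials in the $S_{\xi}$'s and $S_{\eta}^*$'s are dense in $\mathcal{O}_X$, and a monomial $S_{x_1}\cdots S_{x_k}S_{y_1}^*\cdots S_{y_l}^*$ has weight $k-l$, $Y$ is the closed span of such monomials with $k - l = 1$. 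Writing $S_{x_1}\cdots S_{x_k}S_{y_1}^*\cdots S_{y_l}^* = S_{x_1}\cdot(S_{x_2}\cdots S_{x_k}S_{y_1}^*\cdots S_{y_l}^*)$ exhibits the second factor as an element of $B$, hence $\mathrm{span}\{S_x b : x \in X, b \in B\}$ is dense in $Y$. Fullness of $Y$ over $B$ then follows since $S_x^*S_y = i((x|y)_A) \in A \subset B$ and $X$ is full over $A$.

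For (2), I would verify the reproducing identity $y = \sum_i S_{u_i}(S_{u_i}|y)_B$ first on the dense set $y = S_x b$ from part (1), then extend by continuity. For such $y$, compute $S_{u_i}^* S_x b = i((u_i|x)_A) b$ and $S_{u_i} \cdot i((u_i|x)_A) b = S_{u_i(u_i|x)_A} b$, so the partial sums equal $S_{\sum_{i \le n} u_i(u_i|x)_A}\, b$, which converges to $S_x b = y$ because $\{u_i\}$ is a basis of $X$ and $x \mapsto S_x$ is isometric (the inner product is preserved: $\|S_x\|^2 = \|i((x|x)_A)\| = \|x\|_X^2$). The extension to general $y \in Y$ uses that the map $\xi \mapsto S_\xi$ gives a contractive column-finite operator between Hilbert modules, so unconditional convergence for elements of $X$ transfers to unconditional convergence for elements of $Y$.

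For (3), once (2) is in hand the computation is short. For $a \in A \subseteq B$, the left action of $B$ on $Y$ gives $a \cdot S_{u_i} = i(a)S_{u_i} = S_{\phi(a)u_i}$, so
\[
(S_{u_i} \mid a \cdot S_{u_i})_B \;=\; S_{u_i}^* S_{\phi(a)u_i} \;=\; i((u_i|\phi(a)u_i)_A).
\]
Applying $\tau$ and identifying $i(A)$ with $A$, $F_Y(\tau)(a) = \sum_i \tau|_A((u_i|\phi(a)u_i)_A) = F_X(\tau|_A)(a)$, as claimed.

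The main obstacle will be the density claim in (1) together with the extension in (2): making the passage from the Fock-picture generators to genuine elements of the spectral subspace rigorous, and verifying that the unconditional sum for an infinite basis behaves correctly when one has only norm-convergence in $X$ rather than a finite frame. Everything else is formal manipulation of the Cuntz--Pimsner relations.
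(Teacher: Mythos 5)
Your proposal is correct and follows essentially the same route as the paper: the same bimodule verifications for (1), the same computation $\sum_i S_{u_i}S_{u_i}^*S_xb = S_{\sum_i u_i(u_i|x)_A}b = S_xb$ for (2), and the identical trace calculation $(S_{u_i}\,|\,aS_{u_i})_B = i((u_i|\phi(a)u_i)_A)$ for (3). The only difference is that you supply the details the paper dismisses as "easily checked" --- the spectral-projection density argument for $\mathrm{span}\{S_xb\}$ in $Y$ and the uniform-boundedness/continuity step extending the frame identity from $S_xb$ to all of $Y$ --- which is a welcome tightening rather than a different proof.
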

\begin{proof}
(1)Since  $y,w \in Y$ is in 1-spectral subspace, 
$y^*w$ is in the  
the fixed point algebra $B = {\mathcal O}_X^{\alpha}$
under $\alpha$. Since $\|(y\ | \ y)_B\| = \|y^*y\|$, 
$Y$ is complete. The others are also easily  checked. \\
(2)Let $\{u_i\}_{i=1}^{\infty}$ be a basis of $X$. 
For any $x \in X$, $b \in B$, we have 
\[
\sum _i S_{u_i}(S_{u_i}|S_xb)_B = \sum _i S_{u_i}S_{u_i}^* S_xb,  
=  S_{\sum _iu_i(u_i|x)_A}b = S_xb. 
\]
since $\|S_x\| = \|x\|$. \\ 
(3)For any trace $\tau \in Trace(B)$ and for any $a \in A$, 
\begin{align*}
(F_Y(\tau))(a) 
& = \sum_{i=1}^{\infty} \tau ((S_{u_i} \ |\ aS_{u_i})_B) 
= \sum_{i=1}^{\infty} \tau (S_{u_i}^* aS_{u_i}) \\
& = \sum_{i=1}^{\infty} \tau ((u_i \ |\ \phi(a)u_i)_A).
= (F_X(\tau|_A))(a).
\end{align*}
\end{proof}

{\bf Remark.} In the above, 
The Perron-Frobenius operator $F_X$ 
associated with a bimodule $X$ depends on the choice of the 
bimodule by definition. Since the bimodule $Y$ is defined by only the
$C^*$-algebra ${\mathcal O}_X$ with the gauge action $\gamma$, 
the Perron-Frobenius operator $F_Y$ associated with the bimodule $Y$ 
is an invariant of $C^*$-algebra ${\mathcal O}_X$ 
with the gauge action $\gamma$ up to conjugacy and does not depend on the 
original bi-module $X$. 
Moreover (3) of the above proposition 
shows that, for a fixed $\beta$, 
the set $Ext(KMS_{\beta}({\mathcal O}_X, \gamma))$ of extreme $\beta-KMS$ 
states for the gauge action $\gamma$ on ${\mathcal O}_X$ and 
the set 
$$
\{(F_Y(\tau|_B))(I) \ | 
\ \tau \in Ext(KMS_{\beta}({\mathcal O}_X, \gamma)) \}. 
 $$ 
are invariants of $C^*$-algebra ${\mathcal O}_X$ 
with the gauge action $\gamma$. We do study 
the number $(F_Y(\tau|_B))(I)= (F_X(\tau|_A))(I)$
and more generally  a sequence  
 $((F_Y^n(\tau|_B))(I))_n = ((F_X^n(\tau|_A))(I))_n$ 
in the next section.

\section
{orbit structure of branched points}

Since a rational functions is analytic as a map
of $\hat{\mathbb C}$ to  $\hat{\mathbb C}$ and 
have a rigid nature, its behavior on the singularities 
determines the main property of the rational function. 
Therefore it is important to study the orbit structure 
of the branched points of a rational function. 

\noindent
{\bf Definition.} Let $R$ be a rational function with $N =deg R \geq 2$. 
We denote by $b_n(z) = \ ^{\#}(R^{-n}(z)) $ 
the  number of the $n$-th backward orbit 
$R^{-n}(z)$ counted without multiplicity 
starting at $z \in \hat{\mathbb C}$. We define the 
associated sequence $b(z) := (b_n(z))_{n=0}^{\infty}$. If the backward orbit 
$\cup_{n=1}^{\infty} R^{-n}(z)$ has no intersection with the set 
$B_R$ of the branched points, then 
\[
b(z) = (1,N,N^2,N^3,\dots,N^n,\dots ). 
\]
In general the sequence $b(z)$ measures the existence of branched 
points in the  backward orbit $\cup_{n=1}^{\infty} R^{-n}(z)$ 
starting at $z$.

\noindent
{\bf Example.} Let $R(z) = z^2$. Then $B_R = \{0, \infty \}$. 
Since $R^{-1}(0) = \{0\}$ and $R^{-1}(\infty) = \{\infty \}$, 
we have
\[
b(0) =(1,1,1,1,1,\dots ), \ \ 
b(\infty) = (1,1,1,1,1,\dots )
\]

\medskip

\noindent
{\bf Example.} Let $R(z) = z^2+1$. Then $B_R = \{0, \infty \}$. 
Since there exist no branched point in the  backward orbit 
$\cup_{n=1}^{\infty} R^{-n}(0)$ 
starting at $0$ and $R^{-1}(\infty) = \{\infty \}$,
\[
b(0) =(1,2,4,8,16,\dots, 2^n,\dots ), \ \ \ 
b(\infty) = (1,1,1,1,1,\dots )
\]  

\medskip

\noindent
{\bf Example.} Let $R(z) = z^2-1$. Then $B_R = \{0, \infty \}$. 
Since $R(0) = -1, R(-1) = 0$ and $R^{-1}(\infty) = \{\infty \}$, 
we have that 
\[
b(0) =(1,2,3,6,11,\dots ) = (b_n(0))_n, \text{ where}
 \ b_{2n}(0) = \frac{1 + 2^{2n +1}}{3}, \ 
b_{2n+1}(0) = \frac{2 + 2^{2n +2}}{3}
\]
\[
b(\infty) = (1,1,1,1,1,\dots )
\]  

\medskip

\noindent
{\bf Example.} There exists a constant $c$ with $R(z) = z^2 + c$ 
such that $R^3(0) = 0$ and $R(0) \not= 0$, $R^2(0) \not=0$. 
Then $B_R = \{0, \infty \}$. 
We have that 
\[
b(0) =(1,2,4,7,14,\dots ), \ \ 
b(\infty) = (1,1,1,1,1,\dots )
\]

\noindent
{\bf Example.} For any fixed natural number $m \geq 2$, 
there exists a constant $c_m$ with $R(z) = z^2 + c_m$ 
such that $R^m(0) = 0$ and $R^k(0) \not= 0$ for $k = 1, \dots , m-1$, 
and  $B_R = \{0, \infty \}$. 
We have that 
\[
b(0) =(1,2,4,\dots,2^{m-1}, 2^m-1,\dots ), \ \ 
b(\infty) = (1,1,1,1,1,\dots )
\]
In fact, consider a sequence $(f_m)_m$ of  real functions defined by 
\[
f_{m+1}(x) = xf_m(x)^2 +1, \ \ \  f_1(x) = 1, \ \ f_2(x) = x + 1
\]
Then $f_m$ is a polynomial of degree $2^{m-1} -1$ and has a real root. 
Let 
\[
c_m := \min \{ x \in {\mathbb R} \ | \ f_m(x) = 0 \}
\]
Then $f_m(c_m) = 0$. We shall show that 
\[
c_2 = -1 > c_3 > \dots > c_m > c_{m+1} > \dots 
\]
Since $f_{m+1}(c_m) = c_mf_m(c_m)^2 +1 = 1 > 0$ and 
$f_{m+1}(x) \rightarrow -\infty$ as $x \rightarrow -\infty$, 
we have $c_{m+1} < c_m$. For $k = 1,2,\dots, m-1$, 
we have $f_k(c_m) \not= 0$, because $c_m < c_k$ and 
$c_k := \min \{ x \in {\mathbb R} \ | \ f_k(x) = 0 \}$.

Define $R(z) = z^2 + c$. Let $g_n(c)$ be the 
constant term of $n$-th iteration $R^n$ of $R$. 
Then $g_{n+1}(c) =  g_n(c)^2 + c$ and $g_1(c) = c$.
Then we have 
$g_n(c) = cf_n(c)$ by induction. 
Fix a natural number $m \geq 2$ and let $R(z) = z^2 + c_m$ in particular. 
Since $g_n(c_m)$ is the 
constant term of $n$-th iteration $R^n$ of $R$, 
$R^n(0) = g_n(c_m)$. Then $R^m(0) = g_m(c_m) = c_mf_m(c_m) = 0$. 
But for $k = 1,2,\dots, m-1$, 
we have $R^k(0) = g_k(c_m) = c_mf_k(c_m) \not= 0$.

\medskip

\noindent
{\bf Remark.} The main theorem bellow shows that we can distinguish 
these examples  
of quadratic polynomials in terms of $C^*$-algebras with gauge action, 
which could not be distinguished in our previous paper \cite{IKW}
where we counted only  the numbers of extreme $\beta$-KMS states.

\begin{thm}
Let $Q$ and $R$ be rational functions with the degrees at least two. 
Suppose that there exists an isomorphism 
$h: {\mathcal O}_{Q}(\hat{\mathbb C}) 
\rightarrow {\mathcal O}_{R}(\hat{\mathbb C})$ such that  
${\gamma}_R = h {\gamma}_Q h^{-1}$, where ${\gamma}_Q$ and 
${\gamma}_R$ are the associated guage actions. 
Then their backward orbit structures given by the 
number of $n$-th backward orbit starting at the 
branched points are same, that is, 
$$
\{b(z) \ |\  z \in B_Q \} = \{b(z) \ | \  z \in B_R \}
$$ 
\end{thm}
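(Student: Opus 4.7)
The strategy is to produce the sequence $b(z)$ as an invariant of $(\mathcal{O}_X,\gamma)$ up to conjugacy, namely as the sequence $((F_X^n(\tau_z|_A))(I))_n$ for the extreme KMS state $\tau_z$ of the gauge action associated with the branched point $z$, and then to transport this invariant across the gauge-equivariant isomorphism $h$.

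First, I verify invariance. The isomorphism $h$ sends the fixed-point algebra $B_Q=\mathcal{O}_Q^{\gamma_Q}$ onto $B_R=\mathcal{O}_R^{\gamma_R}$ and the $1$-spectral subspace $Y_Q$ onto $Y_R$ as Hilbert bimodules, so the induced affine map on traces conjugates the Perron--Frobenius operators $F_{Y_Q}$ and $F_{Y_R}$ (by the preceding Proposition). Since the gauge actions are intertwined, $h^*$ bijects $\mathrm{Ext}(\mathrm{KMS}_\beta(\mathcal{O}_R,\gamma_R))$ with $\mathrm{Ext}(\mathrm{KMS}_\beta(\mathcal{O}_Q,\gamma_Q))$ for every $\beta$. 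Combining these facts with the iterated identity $F_Y^n(\tau)|_A = F_X^n(\tau|_A)$ obtained by induction from part (3) of the Proposition, the collection
\[
\bigl\{\bigl((F_Y^n(\tau|_B))(I)\bigr)_{n\ge 0} : \tau\in\mathrm{Ext}(\mathrm{KMS}_\beta(\mathcal{O}_X,\gamma))\bigr\}
\]
is an invariant of $(\mathcal{O}_X,\gamma)$ at each $\beta$.

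Second, I identify this invariant with $\{b(z):z\in B_R\}$. I fix $\beta$ in the regime of \cite{IKW} where the extreme KMS states are parametrized by branched points, with $\tau_z$ corresponding to $z\in B_R$ and $\tau_z|_A=\delta_z$ (the Dirac measure at $z$; note $\tau_z|_A(1)=\tau_z(1)=1=\delta_z(1)$, so the scalar is fixed at $1$). By part (3) of the Proposition, $(F_Y^n(\tau_z|_B))(I)=(F_X^n(\delta_z))(I)$. Iterating the explicit formula $F_X(\delta_y)=\sum_{x\in R^{-1}(y)}\delta_x$ (sum without multiplicity, from the Example) and using that the fibers $R^{-1}(x)$ over distinct $x$ are disjoint, I get $F_X^n(\delta_z)=\sum_{x\in R^{-n}(z)}\delta_x$, so evaluation at the constant function $I$ yields $b_n(z)$. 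Hence the invariant above equals $\{b(z):z\in B_R\}$, and paired with the corresponding statement for $Q$ the theorem follows.

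The main obstacle is pinning down that each extreme KMS state $\tau_z$ from the \cite{IKW} parametrization really satisfies $\tau_z|_A=\delta_z$, rather than a measure with additional mass distributed along the forward or backward orbit of $z$. This requires re-examining the \cite{IKW} classification at the appropriate $\beta$ and identifying the extreme atomic solutions of the KMS scaling condition on $\mathrm{Trace}(A)$ as exactly the point masses at branched points. Once this bridging fact is secured, the remainder of the argument is the routine iteration of $F_X$ on Dirac measures and the ``no multiplicity'' bookkeeping built into the Perron--Frobenius operator.
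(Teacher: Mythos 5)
There is a genuine gap, and it sits exactly at the point you flag as ``the main obstacle'': the identification $\tau_z|_A=\delta_z$ is in fact \emph{false}, and without it your second step collapses. The KMS condition (via the Laca--Neshveyev correspondence used in \cite{IKW}) forces the restriction of a $\beta$-KMS state to $A$ to satisfy a scaling/subinvariance relation with respect to $F_{X,\beta}:=e^{-\beta}F_X$; a Dirac mass at a branched point does not satisfy it. The extreme $\beta$-KMS state attached to $z\in B_R$ restricts to $A$ as the normalized geometric series over the whole backward orbit,
\[
\tau_{\beta,z} \;=\; m_{\beta,z}\sum_{k=0}^{\infty}F_{X,\beta}^{\,k}(\delta_z)
\;=\; m_{\beta,z}\sum_{k=0}^{\infty}\frac{1}{e^{k\beta}}\sum_{x\in R^{-k}(z)}\delta_x ,
\qquad
m_{\beta,z}=\Bigl(\sum_{k=0}^{\infty}e^{-k\beta}b_k(z)\Bigr)^{-1},
\]
which is precisely ``a measure with additional mass distributed along the backward orbit of $z$'' --- the possibility you correctly worry about but then set aside. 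Your observation that $\tau_z|_A(1)=1=\delta_z(1)$ only fixes the total mass and does nothing to rule this out. Consequently $(F_X^n(\tau_z|_A))(I)$ is not $b_n(z)$ but a tail sum of the series $\sum_k e^{-k\beta}b_k(z)$, and your invariant does not directly read off $b(z)$.

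The gap is repairable, and the paper's repair is a short telescoping argument you are missing: since $\tau_{\beta,z}-F_{X,\beta}(\tau_{\beta,z})=m_{\beta,z}\,\delta_z$, one recovers the Dirac mass (up to the constant $m_{\beta,z}$) from the invariant trace itself, and then
\[
c_n(z)-c_{n+1}(z)=m_{\beta,z}\,F_{X,\beta}^{\,n}(\delta_z)(1),
\qquad
1-c_1(z)=m_{\beta,z},
\]
where $c_n(z)=(F_{Y,\beta}^{\,n}(\varphi_{\beta,z}|_B))(1)$, so $b_n(z)$ is obtained from the successive differences of the sequence $c(z)$ divided by $1-c_1(z)$ (and rescaled by the known factor $e^{n\beta}$). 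Everything else in your plan --- transporting the $1$-spectral subspace $Y$ and $F_Y$ through $h$, using part (3) of the Proposition to pass from $F_Y$ on $\mathrm{Trace}(B)$ to $F_X$ on $\mathrm{Trace}(A)$, the formula $F_X^n(\delta_z)=\sum_{x\in R^{-n}(z)}\delta_x$ counted without multiplicity --- matches the paper's argument. You should also record, as the paper does, that $\deg Q=\deg R$ (from the count of extreme KMS states at large $\beta$) so that a single $\beta$ lies in the correct regime for both algebras.
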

\begin{proof}
Suppose that there exists an isomorphism 
$h: {\mathcal O}_{Q}(\hat{\mathbb C}) 
\rightarrow {\mathcal O}_{R}(\hat{\mathbb C})$ such that  
${\gamma}_R = h {\gamma}_Q h^{-1}$. Then  the fixed point algebras by 
the gauge actions are isomorphic, which will be denoted by $B$. 
Moreover  the  1-spectral subspaces are isomorphic as Hilbert bi-module 
over $B$, which is denoted by $Y$. We should be careful that 
Hilbert bi-modules $X$  over 
the coefficient algebra 
$A = C(\hat{\mathbb C})$  are {\it not}  necessarily isomorphic. 
Therefore we should investigate invariants in terms of  
Hilbert bi-module $Y$ over $B$. 

We also note that  $deg \ R = deg\  Q =: N$, 
since the number of extreme $\beta$-KMS states   
$Ext(KMS_{\beta}({\mathcal O}_X, \gamma))$  for the gauge action 
$\gamma$ on ${\mathcal O}_X$ is exactly  $N = deg \ R$ for 
$\beta > deg \ R$, as in 
Theorem A in \cite{IKW}. 

Fix  $\beta > N = deg \  R$. Put $F_{X, \beta} = e^{-\beta}F_X$. For a 
branched point $z \in B_R$, let $\delta_z$ be the Dirac measure on 
$\hat{\mathbb C}$ corresponding to one point $z$. Define a trace 
$\tau_{\beta, z}$ on $A = C(\hat{\mathbb C})$, by 
\[
\tau_{\beta,z} = 
 m_{\beta,z}\sum_{k=0}^{\infty} F_{X, \beta}^k( \delta_z) 
= m_{\beta,z}\sum_{k=0}^{\infty} \frac{1}{e^{k\beta}} 
 \sum_{x \in R^{-k}(z)} \delta_x, 
\]
where $m_{\beta,z}$ is the normalized constant and given  by 
\[
m_{\beta,z} = (\sum_{k=0}^{\infty} \frac{1}{e^{k\beta}} 
 \sum_{x \in R^{-k}(z)} 1  )^{-1}
= (\sum_{k=0}^{\infty} \frac{1}{e^{k\beta}} 
  b_k(z))^{-1}. 
\]
Let $E: {\mathcal O}_{R}(\hat{\mathbb C})  \rightarrow 
{\mathcal O}_{R}(\hat{\mathbb C})^{\gamma}$ be the  
conditional expectation on to the fixed point algebra 
${\mathcal O}_{R}(\hat{\mathbb C})^{\gamma}$ by the gauge 
action defined by 
$E(T) = \int _{\mathbb T} \gamma _z(T)dz$. By 
Theorem A in \cite{IKW}, 
there exists a unique $\beta$-KMS state $\varphi{\beta,z}$ 
on ${\mathcal O}_{R}(\hat{\mathbb C})$ such that 
its  restriction to $A = C(\hat{\mathbb C})$ is 
exactly $\tau_{\beta, z}$. 
Moreover the set of extreme $\beta$-KMS states has a bijective 
correspondence to the set $B_R$ of the branched points under 
the correspondence between $\varphi_{\beta,z}$ and $z \in B_R$. 
The state satisfies that 
$\varphi_{\beta,z} = \varphi_{\beta,z} \circ E$ and 
\[
\varphi_{\beta,z}
(S_{x_1}S_{x_2}\dots S_{x_n}S_{y_n}^*\dots S_{y_2}^*S_{y_1}^*) 
= e^{-\beta} 
\tau_{\beta, z}((y_1 \otimes y_2 \dots \otimes y_n \ 
| \ x_1 \otimes x_2 \dots \otimes x_n)_A).
\]

Consider the fixed point algebra 
$B = {\mathcal O}_{R}(\hat{\mathbb C})^{\gamma}$ by the gauge action 
$\gamma$. Let $F_Y: Trace(B) \rightarrow Trace(B)$ be the 
Perron-Frobenius operator associated with the 1-spectral subspace
$$
Y = \{y \in {\mathcal O}_X \ | \ \gamma _t(y) = ty \ 
(t \in {\mathbb T}) \}. 
$$
Define a sequence $c(z) = (c_n(z))_n$ by 
\[
c_n(z) = (F_{Y, \beta}^n(\varphi_{\beta,z}|_B))(1)
 = (F_{X, \beta}^n(\tau_{\beta, z}))(1), 
\]
which depends on only 
$({\mathcal O}_{R}(\hat{\mathbb C}),{\mathbb T},  \gamma_R)$ 
and  $\varphi_{\beta,z}$. Therefore 
 the family  $\{c(z) \ | \  z \in B_R \}$
of such sequences depends only on 
$({\mathcal O}_{R}(\hat{\mathbb C}),{\mathbb T},  \gamma_R)$ 
up to conjugacy. To make  the proof finished, it is enough to 
show that the  number  $b_n(z)$ of the $n$-th backward orbit 
$R^{-n}(z)$ starting at $z \in B_R$ is described in terms of 
the sequence $c(z) = (c_n(z))_n$. 

Since 
\[
\tau_{\beta, z} - F_{X, \beta}(\tau_{\beta, z}) = m_{\beta,z}\delta_z, 
\]
we have 
\[
1 - c_1(z) = (\tau_{\beta, z} - F_{X, \beta}(\tau_{\beta, z}))(1) 
= m_{\beta,z}\delta_z(1) =  m_{\beta,z}. 
\]
In general, since 
\[
 F_{X, \beta}^n(\tau_{\beta, z}) - F_{X, \beta}^{n+1}(\tau_{\beta, z}) 
= m_{\beta,z}F_{X, \beta}^n(\delta_z), 
\]
we have 
\[
c_n(z) - c_{n+1}(z) 
=  (F_{X, \beta}^n(\tau_{\beta, z}) - F_{X, \beta}^{n+1}(\tau_{\beta, z}))(1) 
= m_{\beta,z}F_{X, \beta}^n(\delta_z)(1) = m_{\beta,z}b_n(z)
\]
Hence
\[
b_n(z) = \frac{c_n(z) - c_{n+1}(z)}{1-c_1(z)}.  
\]

\end{proof}

By a similar argument we have a theorem on the $C^*$-algebra
${\mathcal O}_{R}(J_R)$ associated with a complex dynamical system
$(R^n)_n$ restricted  
to  the Julia set $J_R$. 

\begin{thm}
Let $Q$ and $R$ be rational functions with the degrees at least two. 
Suppose that there exists an isomorphism 
$h: {\mathcal O}_{Q}(J_Q) 
\rightarrow {\mathcal O}_{R}(J_R)$ such that  
${\gamma}_R = h {\gamma}_Q h^{-1}$, where ${\gamma}_Q$ and 
${\gamma}_R$ are the associated guage actions. 
Then their backward orbit structures 
given by the 
number of $n$-th backward orbit 
starting at the 
branched points on the Julia sets are same, that is, 
$$
\{b(z) \ |\  z \in B_Q \cap J_Q \} 
= \{b(z) \ | \  z \in B_R \cap J_R \}
$$ 
\end{thm}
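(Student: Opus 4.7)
The plan is to imitate the proof of Theorem~4.1 verbatim, replacing the global bimodule $X = C(\graph R)$ over $A = C(\hat{\mathbb{C}})$ by the restricted bimodule $X(J_R) = C(\graph R|_{J_R})$ over $A = C(J_R)$, and to check at each step that the ingredients transfer. The key structural fact that makes this work is that $J_R$ is completely invariant: $R^{-1}(J_R) = J_R$, so the entire $n$-th backward orbit $R^{-n}(z)$ of a point $z \in J_R$ remains inside $J_R$, and the branch indices $e_R(x)$ for $x \in R^{-n}(z)$ are unchanged from those read off on the Riemann sphere. In particular, $b_n(z)$ computed on $J_R$ coincides with $b_n(z)$ computed on $\hat{\mathbb{C}}$ whenever $z \in J_R$.

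First I would invoke the KMS-state analysis for $\mathcal{O}_R(J_R)$ (Theorem~A of \cite{IKW} applies equally on the Julia set): the gauge action has a phase transition at $\beta = \log N$, and for $\beta > \log N$ the extreme $\beta$-KMS states are in bijection with $B_R \cap J_R$. Under the conjugacy $h$ the fixed point algebras $B_Q, B_R$ of the gauge actions are isomorphic, and so are the $1$-spectral subspaces $Y_Q, Y_R$ as Hilbert $B$-bimodules. Therefore, as in the proof of Theorem~4.1, the cardinality $\#(B_R \cap J_R) = \#(B_Q \cap J_Q)$ is recovered, and common $\deg R = \deg Q = N$ is recovered from the number of extreme $\beta$-KMS states for $\beta > \log N$.

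Next, for each $z \in B_R \cap J_R$ I would form the trace
\[
\tau_{\beta,z} \;=\; m_{\beta,z}\sum_{k=0}^{\infty} F_{X(J_R),\beta}^{\,k}(\delta_z)
\;=\; m_{\beta,z}\sum_{k=0}^{\infty} \frac{1}{e^{k\beta}} \sum_{x \in R^{-k}(z)} \delta_x
\]
on $C(J_R)$, where the Perron-Frobenius operator $F_{X(J_R)}$ acts on Dirac measures exactly as in the example of \S3 (the computation there only uses the local formula for the inner product, and the restriction $R|_{J_R} : J_R \to J_R$ again satisfies $\sum_{x \in R^{-1}(y)} e_R(x) = N$ for all $y \in J_R$). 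By the KMS classification, $\tau_{\beta,z}$ extends uniquely to an extreme $\beta$-KMS state $\varphi_{\beta,z}$ on $\mathcal{O}_R(J_R)$, and by Proposition~3.1(3) the sequence
\[
c_n(z) \;=\; \bigl(F_{Y,\beta}^{\,n}(\varphi_{\beta,z}|_B)\bigr)(1)
\;=\; \bigl(F_{X(J_R),\beta}^{\,n}(\tau_{\beta,z})\bigr)(1)
\]
depends only on $(\mathcal{O}_R(J_R), \mathbb{T}, \gamma_R)$ up to conjugacy. The telescoping identity
\[
F_{X(J_R),\beta}^{\,n}(\tau_{\beta,z}) - F_{X(J_R),\beta}^{\,n+1}(\tau_{\beta,z}) = m_{\beta,z}\,F_{X(J_R),\beta}^{\,n}(\delta_z)
\]
then yields $b_n(z) = (c_n(z) - c_{n+1}(z))/(1 - c_1(z))$, exhibiting the unordered family $\{b(z) : z \in B_R \cap J_R\}$ as an invariant of the pair $(\mathcal{O}_R(J_R), \gamma_R)$ up to conjugacy.

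The main (mild) obstacle is purely bookkeeping: one must confirm that the IKW parametrization of extreme KMS states by branched points, proved there for $\mathcal{O}_R(\hat{\mathbb{C}})$, specializes to $\mathcal{O}_R(J_R)$ with branched points replaced by $B_R \cap J_R$. This hinges on complete invariance of $J_R$ (so that $R^{-k}(z) \subset J_R$ makes the series defining $\tau_{\beta,z}$ live naturally on $C(J_R)$) and on the fact that no branched point outside $J_R$ can appear in the backward orbit of a point of $J_R$; both are immediate from $R^{-1}(J_R) = J_R$. Once this is in hand, the rest of the argument is an exact transcription of the proof of Theorem~4.1.
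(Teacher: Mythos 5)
Your proposal is correct and follows exactly the route the paper intends: the paper itself gives no separate proof of this theorem, saying only that it follows ``by a similar argument'' from the Riemann-sphere case, and your write-up is precisely that argument carried out, with the complete invariance $R^{-1}(J_R)=J_R$ correctly identified as the fact that makes the transfer work. Nothing further is needed.
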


\end{document}